\documentclass[12pt]{article}
\usepackage{a4}
\usepackage{amsmath}
\usepackage{amssymb}
\usepackage{amsfonts}
\usepackage{amsthm}
\usepackage{graphicx}
  \newtheorem{thm}{Theorem}
 \newtheorem{cor}[thm]{Corollary}
 \newtheorem{prop}[thm]{Proposition}

 \newtheorem{rem}[thm]{Remark}
 
\DeclareMathOperator{\Log}{Log}
 \DeclareMathOperator{\Arg}{Arg}
 
\newcommand{\C}{\mathbb{C}}
\newcommand{\R}{\mathbb{R}}

\def\C{\mathbb{C}}
\def\R{\mathbb{R}}

\def\H{\mathbb{H}}
\def\la{\lambda}

\author {Christian Berg}
\title {A complete Bernstein function related to the fractal dimension of Pascal's pyramid modulo a prime}

\date{27.1.2025}

\begin{document}
\maketitle
\begin{center}
Dedicated to the memory of Bent Fuglede (1925-2023)
\end{center}
\begin{abstract} Let $f_r(x)=\log(1+rx)/\log(1+x)$ for $x>0$. We prove that $f_r$ is a complete Bernstein function for $0\le r\le 1$ and a Stieltjes function for $1\le r$.
This answers a conjecture of David Bradley that $f_r$ is a Bernstein function when $0\le r\le 1$.
\end{abstract}

{\bf Mathematics Subject Classification}: Primary 30E20; Secondary 26A48

{\bf Keywords}. Bernstein function, complete Bernstein function, Stieltjes function, Pick function.

\section{Introduction and main results}
For $0<r$ we consider the function 
\begin{equation}\label{eq:fr}
f_r(x):=\frac{\log(1+rx)}{\log(1+x)},\quad x>0.
\end{equation}
It extends by continuity to the interval $(\max(-1,-1/r), 0]$  with the value $f_r(0)=r$.
Furthermore, $\lim_{x\to\infty}f_r(x)=1$.

In a study of the fractal dimension of  certain sets in number theory coming from the number of multinomial coefficients congruent to a given residue modulo a prime,  David Bradley is led to the conjecture, that $f_r$ is a Bernstein function when $0<r<1$, see \cite{Brad}.  We prove that this is true as part of the following result:

\begin{thm}\label{thm:main0} The function $f_r$ is a complete Bernstein function
when $0<r<1$  and a Stieltjes function when $r>1$.
\end{thm}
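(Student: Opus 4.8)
The plan is to use the standard characterisations of the two classes in terms of Pick (Nevanlinna) functions: a function that is positive on $(0,\infty)$ and extends holomorphically to $\C\setminus(-\infty,0]$ is a complete Bernstein function precisely when it maps the upper half-plane $\H$ into $\overline{\H}$, and is a Stieltjes function precisely when it maps $\H$ into the closed lower half-plane. Since $f_r(x)>0$ for $x>0$ and $f_r$ extends holomorphically to $\C\setminus(-\infty,-\min(1,1/r)]$ (the only zero of the denominator, at $z=0$, being cancelled by the numerator), everything reduces to determining the sign of $\operatorname{Im}f_r(z)$ for $z\in\H$. For such $z$ both $1+rz$ and $1+z$ lie in $\H$, hence $w_1:=\log(1+rz)$ and $w_2:=\log(1+z)$ have arguments in $(0,\pi)$; writing $f_r=w_1/w_2$ one checks that $\operatorname{sgn}\operatorname{Im}(w_1/w_2)=\operatorname{sgn}(\Arg w_1-\Arg w_2)$. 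Thus it suffices to compare $\Arg\log(1+rz)$ with $\Arg\log(1+z)$.

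First I would exploit monotonicity in the parameter $r$. Fixing $z\in\H$ and writing $\Arg\log(1+rz)=\operatorname{Im}\Log\log(1+rz)$, differentiation gives
\[
\frac{d}{dr}\Arg\log(1+rz)=\operatorname{Im}\frac{z}{(1+rz)\log(1+rz)}=\frac1r\,\operatorname{Im}\frac{\zeta-1}{\zeta\log\zeta},\qquad \zeta:=1+rz\in\H,
\]
using $z/(1+rz)=\tfrac1r(1-1/\zeta)$. Since the two arguments coincide at $r=1$, once I know the right-hand side is $\le 0$ for every $\zeta\in\H$ it follows that $r\mapsto\Arg\log(1+rz)$ is non-increasing; hence $\Arg\log(1+rz)\ge\Arg\log(1+z)$ for $r<1$ (giving $\operatorname{Im}f_r\ge0$, the complete Bernstein case) and the reverse inequality for $r>1$ (giving $\operatorname{Im}f_r\le0$, the Stieltjes case).

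The crux is therefore the following lemma, which I expect to be the main obstacle: $\operatorname{Im}\dfrac{\zeta-1}{\zeta\log\zeta}\le 0$ for all $\zeta\in\H$. I would prove it by passing to the reciprocal and showing instead that $P(\zeta):=\dfrac{\zeta\log\zeta}{\zeta-1}$ is a Pick function, i.e.\ $\operatorname{Im}P\ge0$ on $\H$ (note $P$ has no zeros in $\H$, the points $\zeta=0,1$ lying on the boundary, so the reciprocal is holomorphic there). For this I would use the explicit Stieltjes representation
\[
\frac{\log\zeta}{\zeta-1}=\int_0^\infty\frac{dt}{(1+t)(\zeta+t)},
\]
obtained by partial fractions, which exhibits $\zeta\mapsto\log\zeta/(\zeta-1)$ as a Stieltjes function. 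Multiplying a Stieltjes function by $\zeta$ yields a complete Bernstein function, so $P(\zeta)=\zeta\cdot\log\zeta/(\zeta-1)$ is a complete Bernstein function and in particular a Pick function; taking reciprocals gives $\operatorname{Im}\big((\zeta-1)/(\zeta\log\zeta)\big)\le0$, as required.

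Combining the three steps proves that $f_r$ is a complete Bernstein function for $0<r<1$ and a Stieltjes function for $r>1$ (the case $r=1$ being trivial, $f_1\equiv1$). The one point needing care is the passage from the pointwise sign of $\operatorname{Im}f_r$ on $\H$ back to membership in the respective class; this is where the holomorphic extension to the cut plane and the positivity on $(0,\infty)$ are used, through the cited Pick-function characterisations.
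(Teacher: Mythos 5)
Your proof is correct, but it takes a genuinely different route from the paper's. Both arguments reduce the theorem to determining the sign of $\Im f_r$ on $\H$ and then invoke the same Pick-function characterizations of $\mathcal{CBF}$ and $\mathcal S$; the difference lies in how that sign is established. The paper computes the boundary behaviour of the harmonic function $\Im f_r$ given by \eqref{eq:Ifr} --- its limits along the real axis and at infinity --- and applies the boundary minimum principle (Proposition~\ref{thm:min}) to get $\Im f_r>0$ for $0<r<1$, then deduces the case $r>1$ from the reciprocal identity \eqref{eq:Son} together with the fact that $1/f\in\mathcal S$ for a nonzero $f\in\mathcal{CBF}$. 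You instead fix $z\in\H$ and vary the parameter: monotonicity of $r\mapsto\Arg\Log(1+rz)$ reduces everything to your key lemma that $P(\zeta)=\zeta\Log\zeta/(\zeta-1)$ is a Pick function, which follows from the elementary representation $\Log\zeta/(\zeta-1)=\int_0^\infty (1+t)^{-1}(\zeta+t)^{-1}\,dt$; in fact you can shortcut the ``Stieltjes times $\zeta$ is complete Bernstein'' step by taking imaginary parts directly in that integral,
\begin{equation*}
\Im P(\zeta)=\int_0^\infty \frac{t\,\Im\zeta}{|\zeta+t|^2}\,\frac{dt}{1+t}>0,\quad \zeta\in\H.
\end{equation*}
Your approach is more self-contained --- no potential-theoretic minimum principle, no vague convergence of boundary measures --- and it handles both ranges of $r$ in a single stroke, even yielding the strict inequalities \eqref{eq:cor2} and \eqref{eq:2cor2} as by-products. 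What it does not give is the explicit representing densities $\sigma_r$ and $\omega_r$: the paper's heavier boundary-value analysis is precisely what produces the quantitative statements of Theorems~\ref{thm:main1} and~\ref{thm:main2}, of which Theorem~\ref{thm:main0} is only the qualitative part. One point to make explicit when writing this up is the passage from ``$\Im f_r\le 0$ on $\H$ plus positivity on $(0,\infty)$'' to $f_r\in\mathcal S$: this characterization requires the holomorphic extension to all of $\C\setminus(-\infty,0]$, which holds here because $f_r$ for $r>1$ is holomorphic on the larger set $\C\setminus(-\infty,-1/r]$ --- you note this, and it is fine.
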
 

Bernstein functions are by definition $C^\infty$-functions $f:(0,\infty)\to[0,\infty)$ satisfying
\begin{equation}\label{eq:BF}
(-1)^{n-1}f^{(n)}(x)\ge 0,\quad x>0, n=1,2,\ldots.
\end{equation} 
In particular, Bernstein functions are non-negative, increasing and concave. To verify the inequalities \eqref{eq:BF} for $f_r$ seems a complicated task, because one has to differentiate a quotient. We prove instead the stronger statement, that $f_r$ belongs to the subclass $\mathcal{CBF}$ of complete Bernstein functions studied in \cite{S:S:V}. Our proof uses complex analysis. 

Bernstein functions have a long history, and they play an important role in probability theory and potential theory, see \cite{S:S:V} and \cite{B:F}. 

We also need the class ${\mathcal S}_{\la}$ of generalized Stieltjes functions of order $\la>0$ defined as the functions $f:(0,\infty)\to [0,\infty)$ having a representation of the form
\begin{equation}\label{eq:Sl}
f(x)=c+\int_0^\infty\frac{d\mu(t)}{(x+t)^\la},
\end{equation}
where $c\ge 0$, and $\mu$  is a positive measure on $[0,\infty)$ such that the integral in \eqref{eq:Sl} converges for $x>0$. When $\la=1$  the functions in $\mathcal S=\mathcal S_1$ are just called Stieltjes functions. A function in the class $\mathcal S_\la$ clearly extends to a holomorphic function in the cut plane $\C\setminus (-\infty,0]$. For more information on these functions see \cite{K:P2}. We mention in particular that $\mathcal S\subset \mathcal S_2$.

Complete Bernstein functions are characterized by  six equivalent conditions given in Theorem 6.2 of \cite{S:S:V}. We shall establish condition (iv), stating that they are Pick functions with certain properties. For the convenience of the reader we have included some key results about Pick functions in Section 2.

We denote by $\Log$ the principal branch of the  logarithm, which is  holomorphic in the cut plane $\C\setminus (-\infty,0]$ and defined by 
$$
\Log(z)=\log|z| + i\Arg(z), 
$$
where $\Arg$ is the principal argument taking its values in $(-\pi,\pi)$.
The expression
$$
f_r(z):=\frac{\Log(1+rz)}{\Log(1+z)}
$$
is a holomorphic extension of $f_r$ to the cut plane $\C\setminus (-\infty, \max(-1,-1/r)]$, ($z=0$ being a removable singularity).  

We first give more details about the case $0<r<1$. 

\begin{thm}\label{thm:main1} For each $0<r<1$ the function $f_r$ is a complete Bernstein function given for $z\in\C\setminus (-\infty,-1]$ by
\begin{equation}\label{eq:fr2}
f_r(z)=\frac{\Log(1+rz)}{\Log(1+z)}=r+\int_1^\infty \frac{z}{z+t}\sigma_r(t)\,dt,
\end{equation}
where 
\begin{equation*}
 \sigma_r(t)=\left\{\begin{array}{cl}
 \frac{-\log(1-rt)}{t\left(\log^2 (t-1)+\pi^2\right)}, & 1<t<1/r,\\
 \infty, & t=1/r,\\
\frac{\log((t-1)/(rt-1))}{t\left(\log^2 (t-1)+\pi^2\right)}, & 1/r < t. 
    \end{array}
  \right.
  \end{equation*}
 On the interval $(-1,\infty)$ the function $f_r$ is strictly concave and strictly increasing from $0$ to $1$.   
  The function $1-f_r(z)$ is a Stieltjes function given by
  \begin{equation}\label{eq:fr3}
  1-f_r(z)=\int_1^\infty \frac{t\sigma_r(t)}{z+t}\,dt,
  \end{equation}
  and $f_r(z)/z$ is a Stieltjes function given by
 \begin{equation}\label{eq:fr31}
  f_r(z)/z=\int_0^\infty \frac{d\mu_r(t)}{z+t}, \quad \mu_r=r\delta_0+1_{(1,\infty)}(t)\sigma_r(t)\,dt,
  \end{equation}
  where $\delta_0$ is the Dirac measure at $0$. 
The function $\sigma_r$ tends to 0 for $t\to 1^+$ and for $t\to\infty$. The following formulas hold
\begin{equation}\label{eq:sig}
\int_1^\infty \sigma_r(t)\,dt=1-r,\quad \int_1^\infty\frac{\sigma_r(t)\,dt}{1+t^2}=
 \frac{\log 2 \log(1+r^2)+\pi\arctan(r)}{\log^2(2)+\pi^2/4}-r.
\end{equation}
\end{thm}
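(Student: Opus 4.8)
The plan is to prove that $f_r$ is a complete Bernstein function by establishing the Stieltjes representation in \eqref{eq:fr2} through complex analysis, specifically by computing the jump of $f_r$ across the branch cut. The key observation is that a function $g$ is a complete Bernstein function precisely when it is a Pick function (holomorphic on the upper half-plane with nonnegative imaginary part there) that is nonnegative on $(0,\infty)$; this is condition (iv) referenced from Theorem 6.2 of \cite{S:S:V}. So first I would verify that $f_r(z)=\Log(1+rz)/\Log(1+z)$ maps the upper half-plane $\H$ into the closed upper half-plane. Since both $\Log(1+rz)$ and $\Log(1+z)$ are Pick functions (being compositions of $\Log$, itself a Pick function, with the affine maps $z\mapsto 1+rz$ and $z\mapsto 1+z$), one should check that their \emph{quotient} lands in $\overline{\H}$; I expect this to follow from a careful tracking of arguments, using that $0<r<1$ forces $\Arg(1+rz)\le\Arg(1+z)$ for $z\in\H$, so that the quotient has argument in $[0,\pi)$.

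Once the Pick property is in hand, the Stieltjes inversion (Nevanlinna) formula gives the representing measure $\sigma_r$ as the boundary density $\frac{1}{\pi}\lim_{y\to0^+}\operatorname{Im}f_r(-t+iy)$ along the negative real axis. The core computation is to evaluate this jump for $t>1$, i.e. on the cut where $1+z<0$. Here $\Log(1+z)=\log(t-1)+i\pi$ approaching from above, while $\Log(1+rz)=\log|1-rt|+i\pi\cdot 1_{\{rt>1\}}$ depending on whether $1+rz$ has crossed into the negative reals. Taking the imaginary part of the quotient $\frac{\log|1-rt|+i\pi\,1_{\{t>1/r\}}}{\log(t-1)+i\pi}$ and simplifying using the conjugate of the denominator produces exactly the two branches of $\sigma_r(t)$: for $1<t<1/r$ the numerator $\Log(1+rz)$ is real so the imaginary part comes solely from the denominator, yielding $\frac{-\log(1-rt)}{t(\log^2(t-1)+\pi^2)}$ after the $1/t$ weight from $\frac{z}{z+t}$, while for $t>1/r$ both numerator and denominator contribute, giving the $\log((t-1)/(rt-1))$ form. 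The value $\sigma_r(1/r)=\infty$ reflects the logarithmic singularity in $\log|1-rt|$ as $t\to1/r$; one checks this singularity is integrable so the measure is finite.

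The remaining assertions then follow fairly directly. The constant $r=f_r(0^+)$ is identified as the value at $0$, consistent with $f_r(0)=r$ stated in the introduction, and the representation \eqref{eq:fr3} for $1-f_r$ follows from rewriting $\frac{z}{z+t}=1-\frac{t}{z+t}$ together with $\int_1^\infty\sigma_r(t)\,dt=1-r$ (the first formula in \eqref{eq:sig}), which itself is obtained by letting $z\to\infty$ in \eqref{eq:fr2} and using $f_r(\infty)=1$. Strict concavity and strict monotonicity on $(-1,\infty)$ follow from the positivity of $\sigma_r$ on its support and the standard fact that a nonconstant complete Bernstein function is strictly increasing and strictly concave. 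The representation \eqref{eq:fr31} for $f_r(z)/z$ is immediate by dividing \eqref{eq:fr2} by $z$ and writing $\frac{1}{z+t}=\frac{1}{t}\cdot\frac{t}{z+t}$ with the atom $r\delta_0$ accounting for the constant term. The boundary behavior $\sigma_r(t)\to0$ as $t\to1^+$ and $t\to\infty$ is checked by elementary asymptotics: near $t=1$ the factor $\log^2(t-1)$ in the denominator dominates, and as $t\to\infty$ both $\log((t-1)/(rt-1))\to-\log r$ stays bounded while the denominator grows.

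The main obstacle I anticipate is twofold. First, rigorously justifying the Pick property of the quotient: one must handle the argument inequality uniformly as $z$ ranges over $\H$, paying attention to the region where $\Log(1+z)$ itself is small (near $z=-1$), since division by a near-zero Pick value could in principle push the argument out of range; a clean way is to verify directly that $\operatorname{Im}f_r(z)\ge0$ by the sign of $\operatorname{Im}(\Log(1+rz)\cdot\overline{\Log(1+z)})$ and showing this quantity is nonnegative throughout $\H$. Second, the second identity in \eqref{eq:sig} is a genuine evaluation rather than a formal consequence: it should come from specializing the Stieltjes representation of $1-f_r$, namely $\int_1^\infty\frac{t\sigma_r(t)}{z+t}\,dt$, at a convenient point. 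Evaluating at $z=i$ and taking the real part gives $\operatorname{Re}(1-f_r(i))=\int_1^\infty\frac{t^2\sigma_r(t)}{1+t^2}\,dt$, which is not quite the stated integral; instead I would evaluate $1-f_r$ or a related combination at $z=i$ and separate real and imaginary parts, using $\Log(1+ri)=\tfrac12\log(1+r^2)+i\arctan r$ and $\Log(1+i)=\tfrac12\log2+i\pi/4$ to produce the closed form with $\log2\log(1+r^2)+\pi\arctan r$ in the numerator and $\log^2 2+\pi^2/4$ in the denominator. Matching this to $\int_1^\infty\frac{\sigma_r(t)}{1+t^2}\,dt$ requires the algebraic identity $\frac{1}{1+t^2}=1-\frac{t^2}{1+t^2}$ combined with the first formula in \eqref{eq:sig}, and this bookkeeping is where the calculation is most delicate.
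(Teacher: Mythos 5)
Your overall architecture coincides with the paper's: show $f_r$ is a Pick function, recover the measure by Nevanlinna--Stieltjes inversion of the boundary values along the cut, then obtain the constants by letting $z\to\infty$ and evaluating at a special point. Your jump computation on the cut (including the factor $1/t$ relating the boundary density to $\sigma_r$), the derivation of \eqref{eq:fr3} and \eqref{eq:fr31}, and the first formula in \eqref{eq:sig} are correct in substance, and the evaluation at $z=i$ for the second formula also works --- in fact more simply than you describe: from \eqref{eq:fr2}, $\Re\bigl(i/(i+t)\bigr)=1/(1+t^2)$, so $\Re f_r(i)=r+\int_1^\infty\sigma_r(t)(1+t^2)^{-1}\,dt$, and $\Re f_r(i)$ has the stated closed form directly from the definition of $f_r$.

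The genuine gap is the one step that carries all the difficulty: proving $\Im f_r(z)\ge 0$ on $\H$. Neither of your two routes is a proof. First, the inequality $\Arg(1+rz)\le\Arg(1+z)$ is true, but it concerns the arguments of $1+rz$ and $1+z$, not of their logarithms; since $\Log(1+rz)$ and $\Log(1+z)$ both lie in $\H$, what is needed is $\Arg\bigl(\Log(1+rz)\bigr)\ge\Arg\bigl(\Log(1+z)\bigr)$, equivalently $\log|1+z|\,\Arg(1+rz)\ge\log|1+rz|\,\Arg(1+z)$. Your inequality makes the factor $\Arg(1+rz)$ on the left \emph{small}, so by itself it leaves the sign undetermined (the only easy region is $|1+rz|\le 1\le|1+z|$, where the two sides have opposite signs). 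Moreover, near $z=-1$ both $\Arg\Log(1+rz)$ and $\Arg\Log(1+z)$ tend to $\pi$ (note $\log(1-r)<0$), so any argument comparison there must compare rates of convergence and is genuinely delicate. Second, your proposed ``clean way'' --- showing $\Im\bigl(\Log(1+rz)\,\overline{\Log(1+z)}\bigr)\ge 0$ on $\H$ --- is not a method but a restatement of the claim: it is precisely inequality \eqref{eq:cor2}, which the paper obtains as a \emph{corollary} of the theorem and explicitly states cannot be seen directly from \eqref{eq:Ifr}. The idea you are missing is the paper's use of the boundary minimum principle for harmonic functions (Proposition~\ref{thm:min}): $\Im f_r$ is harmonic on $\H$, tends to $0$ as $|z|\to\infty$ in $\H$, and its boundary limits on $\R$ are computed explicitly and are nonnegative (they equal $\pi\varphi_r(t)$, with uniform convergence on compact subsets of the three intervals), whence $\Im f_r>0$ throughout $\H$. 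This single computation then does double duty, since the same boundary limits are exactly the inversion data that produce the density $\sigma_r$. Without this step, or an honest proof of the argument inequality, the Pick property is unproven and everything downstream of it in your proposal is conditional.
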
 

We include a plot of $\sigma_r$, where $r=1/2$. Note that we have different scaling on  
the two subintervals $[1, 2),  (2, 5]$.

\medskip
\begin{center}
\includegraphics[scale=0.3]{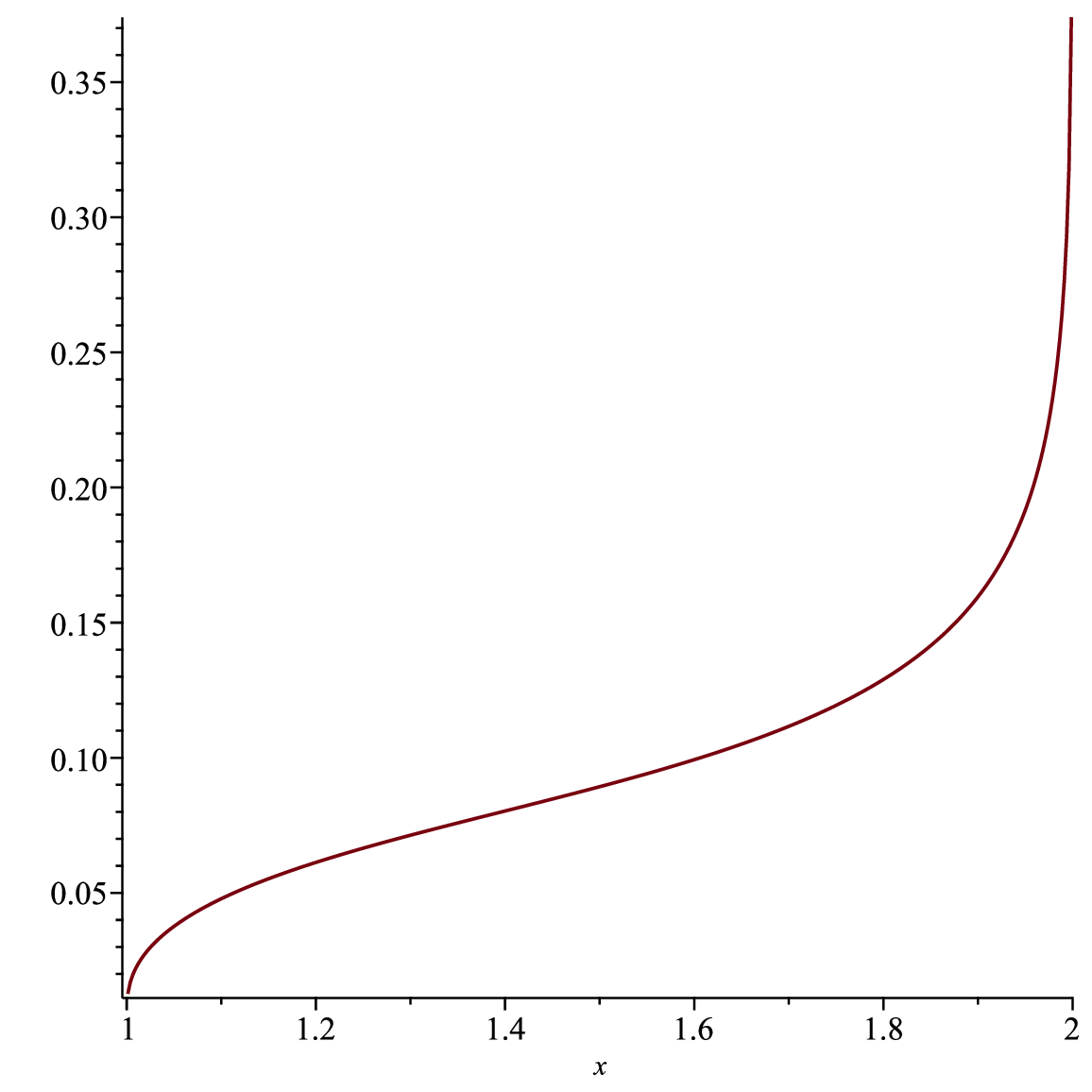}
\includegraphics[scale=0.3]{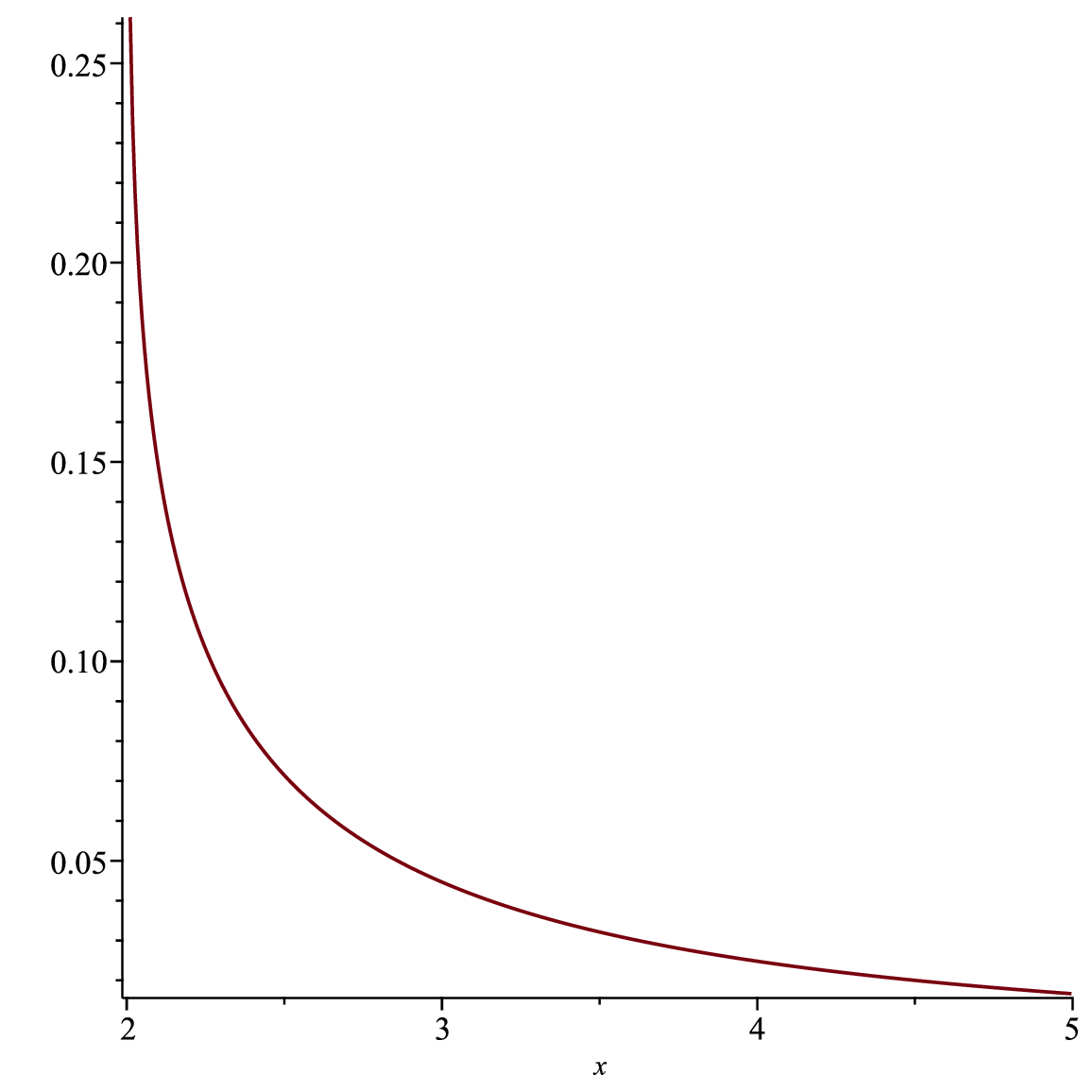}
\end{center}
\begin{center}
Plot of $\sigma_{0.5}$
\end{center}
\begin{rem}\label{thm:Rem1}{\rm There exists a number $r_0\approx 0.1$ such that
$\sigma_r(t)$ is increasing on $(1,1/r)$ when $r_0\le r<1$, but when $0<r<r_0$ then $\sigma_r(t)$ is decreasing in a certain subinterval $[a_r,b_r]\subset (1,1/r)$ and otherwise increasing. 
}
\end{rem}

We next give more details about the case $r>1$. 

\begin{thm}\label{thm:main2} For $r>1$ the function $f_r$ is a Stieltjes function given for $z\in\C\setminus(-\infty,-1/r]$ by
\begin{equation}\label{eq:41}
f_r(z)=1+\int_{1/r}^\infty\frac{\omega_r(t)}{z+t}\,dt,
\end{equation}
where
\begin{equation*}
 \omega_r(t)=\left\{\begin{array}{cl}
\frac{-1}{\log(1-t)}, & 1/r \le t <1,\\
0, & t=1,\\
\frac{\log((rt-1)/(t-1))}{\log^2(t-1)+\pi^2}, & 1<t.
 \end{array}
  \right.
\end{equation*}
On the interval $(-1/r,\infty)$ the function $f_r$ is strictly convex and strictly  decreasing from $\infty$ to $1$ and 
\begin{equation}\label{eq:42}
\int_{1/r}^\infty \frac{\omega_r(t)}{t}\,dt=r-1.
\end{equation}
\end{thm}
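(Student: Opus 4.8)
The plan is to leverage the case $0<r<1$ already settled in Theorem~\ref{thm:main1} through the elementary functional identity
\[
f_r(z)=\frac{1}{f_{1/r}(rz)},\qquad z\in\C\setminus(-\infty,-1/r],
\]
which one checks first on $(0,\infty)$ by the substitution $w=rz$ and then propagates to the whole cut plane by the identity theorem. Since $0<1/r<1$, Theorem~\ref{thm:main1} tells us that $f_{1/r}$ is a complete Bernstein function, hence a nonconstant Pick function; composing with the self-map $z\mapsto rz$ of the upper half-plane keeps $\operatorname{Im}f_{1/r}(rz)>0$ for $\operatorname{Im}z>0$, and $f_{1/r}(rz)$ is moreover zero-free on $\C\setminus(-\infty,-1/r]$ (it is positive on $(-1/r,\infty)$ and has nonzero imaginary part off the real axis). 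Therefore $\operatorname{Im}f_r(z)<0$ on the upper half-plane. Invoking the reciprocal characterization of complete Bernstein functions (the reciprocal of a complete Bernstein function is a Stieltjes function, see \cite{S:S:V}) together with stability of $\mathcal S$ under the scaling $z\mapsto rz$, I would conclude that $f_r\in\mathcal S$, so that $f_r(z)=c+\int_0^\infty d\mu(t)/(z+t)$ for some $c\ge0$ and positive measure $\mu$.

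Next I would pin down $c$ and $\mu$. The only singularities of the holomorphic extension lie on $(-\infty,-1/r]$, and $z=0$ is removable with $f_r(0)=r<\infty$, so $\mu$ carries no mass at $0$ and is supported in $[1/r,\infty)$; letting $x\to\infty$ gives $c=\lim_{x\to\infty}f_r(x)=1$. To identify $\mu$ I would use the Stieltjes--Perron inversion formula, which recovers the density as
\[
\frac{d\mu}{dt}(t)=-\frac1\pi\lim_{\eps\to0^+}\operatorname{Im}f_r(-t+i\eps).
\]
Computing the boundary values of $\Log(1+rz)$ and $\Log(1+z)$ along $z=-t+i\eps$ in the three regimes $1/r\le t<1$, $t=1$, and $t>1$ — where the relevant arguments cross the negative axis and pick up the value $i\pi$ — yields exactly the three branches of $\omega_r(t)$ stated in the theorem. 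Since both $f_r$ and $1+\int_{1/r}^\infty \omega_r(t)/(z+t)\,dt$ are then holomorphic on the cut plane and agree on $(0,\infty)$, they agree throughout, giving \eqref{eq:41}.

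The remaining assertions I would obtain as soft consequences of \eqref{eq:41}. Differentiating under the integral sign (legitimate for $x>-1/r$, where $x+t$ is bounded away from $0$ on the support) gives $f_r'(x)=-\int \omega_r(t)/(x+t)^2\,dt<0$ and $f_r''(x)=2\int \omega_r(t)/(x+t)^3\,dt>0$, because $\omega_r\ge0$ and $\omega_r>0$ on a set of positive measure; this yields strict monotonicity and strict convexity on $(-1/r,\infty)$. For the endpoint values I would read off $f_r(\infty)=1$ from $c=1$ and $f_r(-1/r^+)=\infty$ from the logarithmic divergence of the integral at the left endpoint (equivalently from $\Log(1+rz)\to-\infty$). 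Finally, letting $x\to0^+$ in \eqref{eq:41} and using monotone convergence ($1/(x+t)\uparrow 1/t$) identifies $f_r(0)=r$ with $1+\int_{1/r}^\infty \omega_r(t)/t\,dt$, which is precisely \eqref{eq:42}.

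The main obstacle is the rigorous execution of the inversion step: one must verify that the boundary limits exist and are locally integrable across the transition points $t=1/r$ and $t=1$ (where $\omega_r$ stays finite and where it vanishes, respectively), and that the representing measure has no singular component — the latter following from the absence of poles of $f_r$ on $(-\infty,-1/r]$, which is what upgrades the formally computed density to the genuine Stieltjes measure. The boundary-value computation itself, together with the monotonicity, convexity, and normalization claims, should then be routine.
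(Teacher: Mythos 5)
Your proposal is correct and follows essentially the same route as the paper: the identity $f_r(z)=1/f_{1/r}(rz)$ combined with the fact that reciprocals of nonzero complete Bernstein functions are Stieltjes functions (Theorem 6.2 and Chapter 7 of \cite{S:S:V}) gives $f_r\in\mathcal S$, the constant $1$ comes from the limit at infinity, and the density $\omega_r$ is identified by Stieltjes--Perron inversion of the boundary values $-\frac{1}{\pi}\Im f_r(-t+iy)$ as $y\to 0^+$, with the point masses at $t=1/r,1$ excluded exactly as in the paper's Section 2 machinery (criterion \eqref{eq:Pi4}, which is the precise tool behind your ``no singular component'' remark). The only cosmetic differences are your extra direct check that $\Im f_r<0$ on $\H$ and the use of monotone convergence at $z=0$ where the paper simply substitutes $z=0$ in \eqref{eq:41}.
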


We include a plot of $\omega_r$, where $r=2.5$. Note that we have different scaling on  
the two subintervals $[0.4, 1),  (1, 1.5]$.
\medskip

\begin{center}
\includegraphics[scale=0.3]{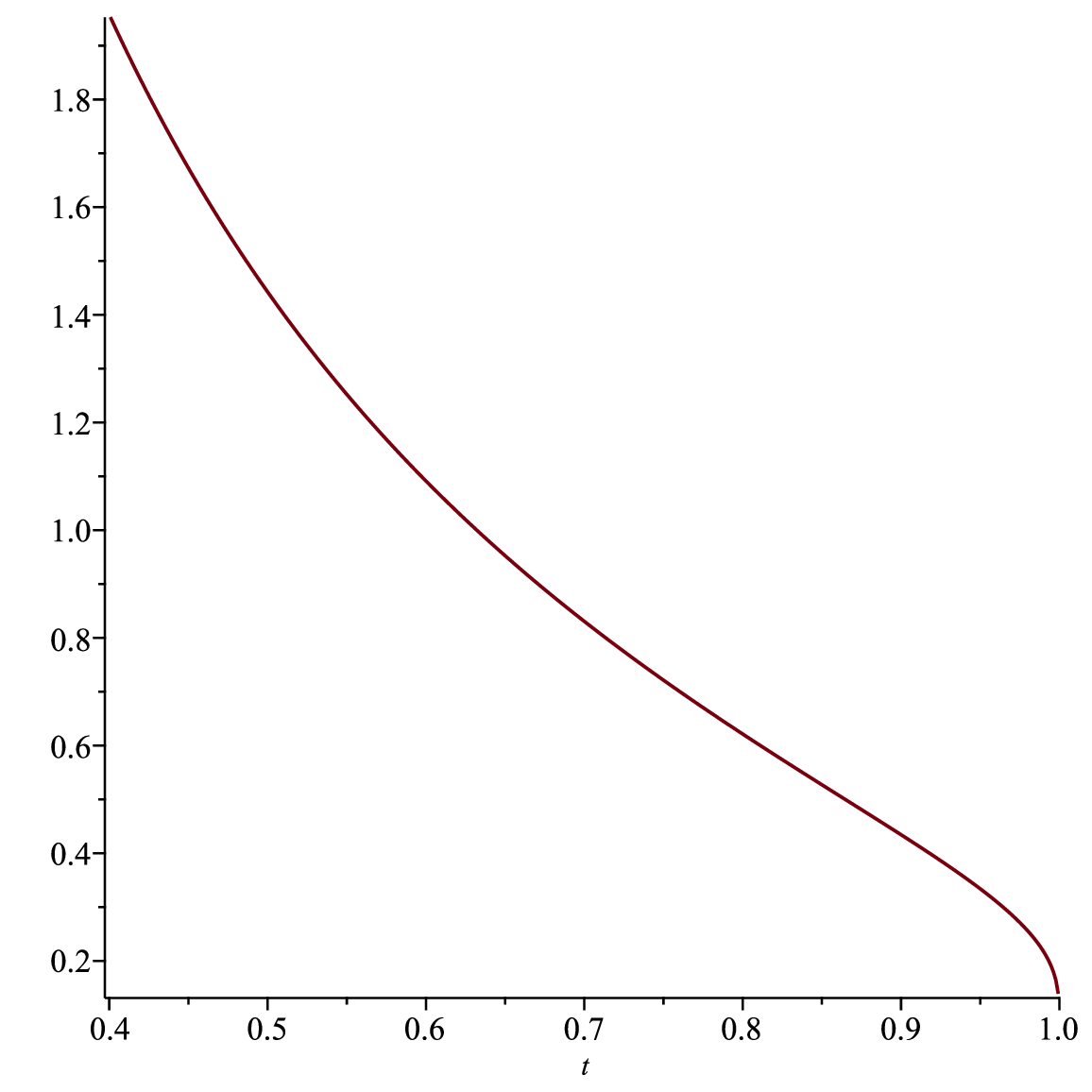}  \includegraphics[scale=0.3]{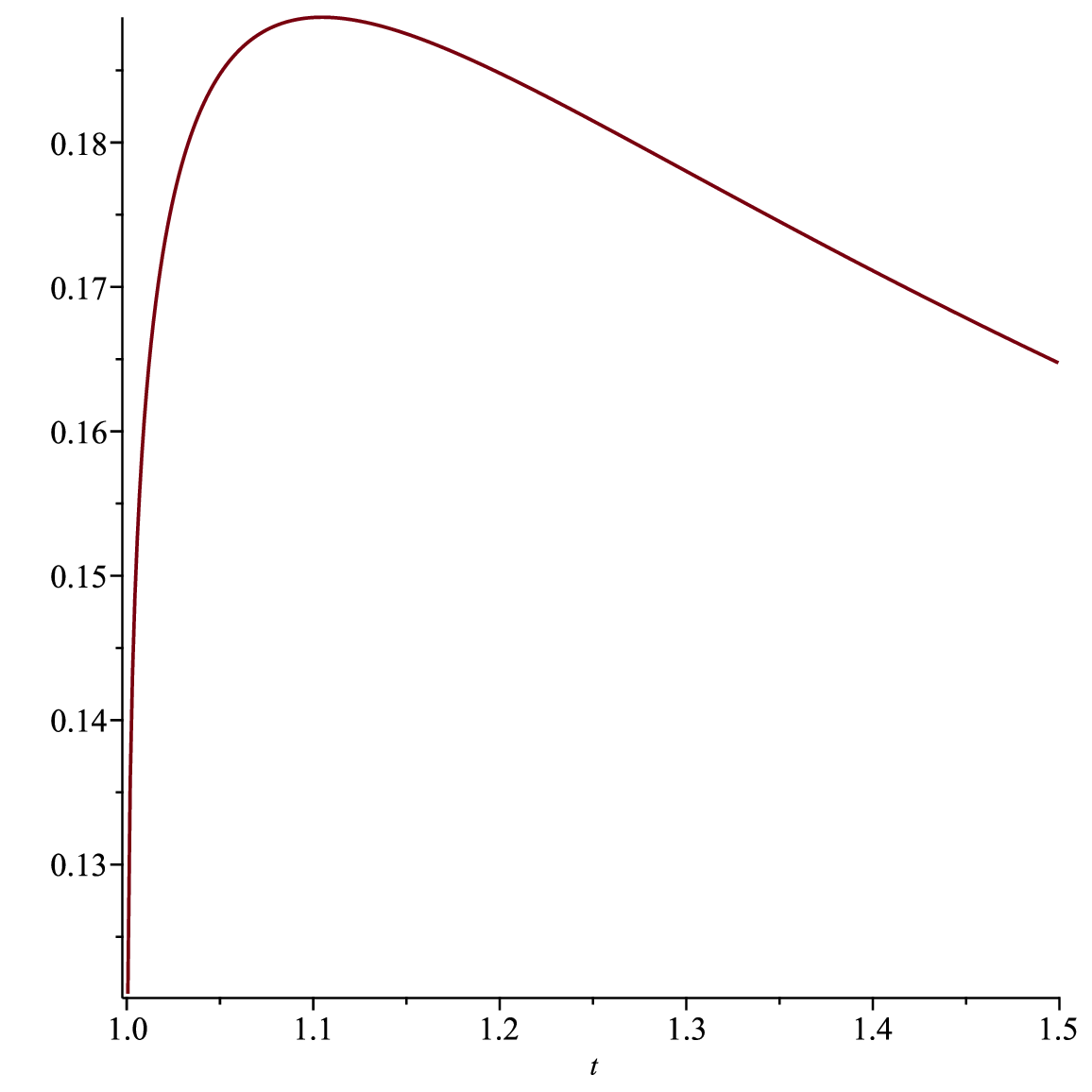}
\end{center}
\begin{center}
Plot of $\omega_{2.5}$
\end{center}

\begin{rem}\label{thm:Rem2} {\rm 
If $f(z)$ is a complete Bernstein function or a Stieltjes function, then so is $f(cz)$ for any $c>0$. In particular, for $0<a<b$
$$
f_{a/b}(bz)=\frac{\Log(1+az)}{\Log(1+bz)}
$$ 
is a complete Bernstein function, and
$$
f_{b/a}(az)=\frac{\Log(1+bz)}{\Log(1+az)}
$$
is a Stieltjes function.
}
\end{rem}

Proofs of the theorems are given in Section 3.

\section{Preliminaries about Pick functions}
Let $\H:=\{z\in\C \mid \Im z>0\}$ denote the open upper half-plane.

A holomorphic function $f:\H\to\C$ is called a Pick function if its imaginary part $\Im f(z)\ge 0$ for all $z\in\H$.

For a Pick function $f$, $\Im f$ is a non-negative  harmonic function, and it has the mean value property. It is therefore either identically zero  or strictly positive on $\H$. This means that a  Pick function is either a real constant function or it maps $\H$ into itself. Pick functions are treated in \cite{Do}, \cite{S:S:V} and in the Appendix to \cite{Sch}.    
 
 A Pick function $f$ has the representation
\begin{equation}\label{eq:Pi}
f(z)=\alpha+\beta z +\int_{-\infty}^\infty \left(\frac{1}{t-z}-\frac{t}{1+t^2}\right)\,d\mu(t), \quad z\in\H,
\end{equation}
where $\alpha\in\R,\beta\ge 0$ and $\mu$ is a positive measure on $\R$ satisfying the integrability condition $\int (1+t^2)^{-1}\,d\mu(t)<\infty$. The triple $(\alpha,\beta,\mu)$ is uniquely determined by $f$ as
\begin{equation}\label{eq:Pi2}
\alpha=\Re f(i),\quad \beta=\lim_{y\to\infty} f(iy)/iy,\quad 
\lim_{y\to 0^+} (1/\pi)\Im f(x+iy)\,dx=\mu,
\end{equation}
where the last convergence is in the vague topology for measures meaning that
\begin{equation}\label{eq:Pi21}
\lim_{y\to 0^+}\frac{1}{\pi}\int \Im f(x+iy)g(x)\,dx=\int g\,d\mu
\end{equation}
for all continuous functions $g:\R\to\C$ with compact support. To see this we note that using convolution we have
$$
\frac{1}{\pi}\int\Im f(x+iy)g(x)\,dx=\frac{\beta y}{\pi}\int g(x)\,dx +\int P_y\star g(t)\,d\mu(t),
$$
where
$$
 P_y(x)=\frac{1}{\pi}\frac{y}{x^2+y^2},\quad y>0, x\in\R
$$
is the Poisson kernel for the upper half-plane.

Equation \eqref{eq:Pi21} now follows because $\lim_{y\to 0^+}P_y\star g(t)=g(t)$  for  every $t\in\R$ (and even uniformly in $t$). Furthermore, we can apply Lebesgue's dominated convergence theorem. To see the latter, if  $g$ has 
  compact support contained in the interval $[a,b]$ and supremum norm $C:=||g||_\infty$,
we find  for $0<y<1, t\in\R$,
$$
|P_y\star g(t)|\le \frac{1}{\pi}\int_a^b\frac{y|g(x)|}{(t-x)^2+y^2}\,dx\le
\left\{\begin{array}{cl}
C, &  t \in\R,\\
\frac{C(b-a)}{\pi (a-t)^2}, & t<a,\\
\frac{C(b-a)}{\pi (t-b)^2}, & t>b,
\end{array}
\right.
$$
so $|P_y\star g(t)|$ has a $\mu$-integrable majorant because of the assumption $\int (1+t^2)^{-1}\,d\mu(t)<\infty$.

This implies in particular, that if there is an open interval $I\subseteq\R$ such that
\begin{equation}\label{eq:Pi3}
\lim_{y\to 0^+}(1/\pi)\Im f(t+iy)=\varphi(t),\quad t\in I
\end{equation}
and the convergence is uniform on compact subsets of $I$, then the restriction of $\mu$ to the interval $I$ has the continuous density $\varphi$ with respect to Lebesgue measure.

The measure $\mu$ has mass $m\ge 0$  at a point $a\in\R$ if and only if
\begin{equation}\label{eq:Pi4}
\lim_{y\to 0^+}y\Im f(a+iy)=m.
\end{equation} 
In fact,
$$
y\Im f(a+iy)=\beta y^2+\int \frac{y^2}{(t-a)^2+y^2}\,d\mu(t)\to \mu(\{a\})
$$
for $y\to 0^+$ because 
$$
\frac{y^2}{(t-a)^2+y^2}\le \frac{2}{1+(t-a)^2},\quad 0<y<1, t\in\R,
$$
and therefore $2(1+(t-a)^2)^{-1}$ is an integrable majorant  with respect to $\mu$. 
 
 For a Stieltjes function $f$ given by \eqref{eq:Sl} with $\la=1$, we get
 $$
 -(1/\pi)\Im f(-x+iy)=P_y\star \mu(x),
 $$ 
and we get similar results for  Stieltjes functions as for Pick functions.

\section{Proofs}

We need the following boundary minimum principle for harmonic functions, see e.g. \cite[p.5]{Doob}.

\begin{prop}\label{thm:min} Let $u$ be a harmonic function in $\H$ satisfying

(i) $\liminf_{z\to t, z\in\H}u(z)\ge 0$ for each $t\in\R$, 

(ii) $\liminf _{|z|\to\infty, z\in\H} u(z)\ge 0$.

\noindent Then $u(z)>0$ for all $z\in\H$ unless $u$ is identically zero. 
\end{prop}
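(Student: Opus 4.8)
The plan is to reduce the statement to the classical minimum principle for harmonic functions on a \emph{bounded} domain, the only real difficulty being that $\H$ is unbounded and that (i)--(ii) provide merely one-sided $\liminf$ bounds on the boundary rather than continuous boundary values. I would prove the two assertions separately: first that $u\ge 0$ throughout $\H$, and then that a non-negative harmonic function on the connected domain $\H$ is either strictly positive or identically zero, which is exactly the dichotomy claimed.

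For the first assertion I argue by contradiction. Suppose $u(z_0)<0$ for some $z_0\in\H$, and put $m:=-\tfrac12 u(z_0)>0$. The key role of hypothesis (ii) is that it yields a \emph{uniform} lower bound at infinity: since $\liminf_{|z|\to\infty}u(z)\ge 0$, there is an $R'>0$ with $u(z)>-m$ for all $z\in\H$ with $|z|\ge R'$. I then fix $R>\max(R',|z_0|)$ and consider the bounded half-disk $D_R:=\{z\in\H:|z|<R\}$, whose boundary splits into the segment $[-R,R]$ and the upper semicircle $\Gamma_R$. On $\Gamma_R$ every boundary point $\zeta$ satisfies $\liminf_{z\to\zeta,\,z\in D_R}u(z)\ge -m$, because $|z|\to R>R'$ forces $u(z)>-m$ eventually; on the real segment hypothesis (i) gives $\liminf_{z\to\zeta}u(z)\ge 0\ge -m$. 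Thus $u+m$ has non-negative lower boundary limits at every point of $\partial D_R$.

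Applying the minimum principle on the bounded domain $D_R$ to $u+m$ now gives $u+m\ge 0$ on $D_R$, hence $u(z_0)\ge -m$, contradicting $u(z_0)=-2m$. The bounded-domain step itself is routine: if $\inf_{D_R}(u+m)<0$, a minimizing sequence has a cluster point in the compact set $\overline{D_R}$; a boundary cluster point contradicts the $\liminf$ bounds, while an interior one forces $u+m$ to be constant and negative by the strong minimum principle, again contradicting the boundary bounds. Since $z_0$ was arbitrary, we conclude $u\ge 0$ on $\H$.

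It remains to establish strict positivity. If $u$ is not identically zero, then, being non-negative and harmonic on the connected open set $\H$, it cannot attain the value $0$ at any interior point: an interior zero would be a global minimum, forcing $u\equiv 0$ by the strong minimum principle (equivalently, the mean value property shows that the zero set of a non-negative harmonic function is open as well as closed, so by connectedness it is empty or all of $\H$). Hence $u>0$ everywhere. The main obstacle is conceptual rather than computational, namely converting the $\liminf$ hypotheses and the non-compactness of $\H$ into a setting where the classical minimum principle applies; the uniform bound extracted from (ii) together with the half-disk exhaustion are precisely what accomplish this.
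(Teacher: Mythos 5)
Your proof is correct, but it cannot be "the same as the paper's" for a simple reason: the paper offers no proof of Proposition~\ref{thm:min} at all, dispatching it with a citation to \cite[p.~5]{Doob}. What you have done is supply the standard self-contained argument that the cited literature contains, and it is sound in all its steps: hypothesis (ii) is correctly converted into a uniform bound $u>-m$ outside a large half-disk (this is exactly what the $\liminf$ as $|z|\to\infty$ gives); the exhaustion by bounded half-disks $D_R$ reduces matters to a compact closure where a minimizing-sequence argument applies; and the two cases (boundary cluster point versus interior cluster point) each produce the required contradiction, the interior case via the strong minimum principle. Two small points deserve mention, both of which your argument survives: the corner points $\pm R$ of $\partial D_R$ lie on $\R$, so they are covered by hypothesis (i) rather than by the bound on the semicircle, which is how you implicitly treat them; and if $\inf_{D_R}(u+m)=-\infty$, an interior cluster point is impossible by continuity of $u$, so the dichotomy in your bounded-domain step still closes. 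Your second part --- that a non-negative harmonic function on the connected set $\H$ is either strictly positive or identically zero --- is exactly the dichotomy the paper itself invokes in Section 2 (via the mean value property), and your open-and-closed argument for the zero set is a clean way to state it. In short, the proposal is a complete and correct elementary proof of a statement the paper takes on faith from \cite{Doob}; what it buys is self-containedness, at the cost of a page of routine potential theory.
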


{\it Proof of Theorem~\ref{thm:main1}}. 

From the expression
$$
f_r(z)=\frac{\log|1+rz|+i\Arg(1+rz)}{\log|1+z|+i\Arg(1+z)},\quad  z\in\H,
$$
we see that $f_r(z)\to 1$  and hence $\Im f_r(z)\to 0$ for $|z|\to \infty$ in $\H$.
 
 The harmonic function $\Im f_r(z)$ in $\H$  is given by
\begin{equation}\label{eq:Ifr}
\Im f_r(z)=\frac{\log|1+z|\Arg(1+rz)-\log|1+rz|\Arg(1+z)}{\log^2|1+z|+\Arg^2(1+z)}.
\end{equation}
It does not seem possible to see directly that this expression is positive, but it will follow from Proposition~\ref{thm:min}.
We claim that for $t\in\R$
$$
\varphi_r(t):=\lim_{z\to t, z\in\H} (1/\pi)\Im f_r(z)=\left\{\begin{array}{cl}
0, & t\ge -1,\\
\frac{-\log(1+rt)}{\log^2(-t-1)+\pi^2},    & -1/r<t<-1,\\
   \infty, & t=-1/r,\\
   \frac{\log((-t-1)/(-1-rt))}{\log^2(-t-1)+\pi^2}, & t<-1/r.
     \end{array}
  \right.
$$
For this we use that $\Arg(z)\in (0,\pi)$ for $z\in \H$ and  
$$
\lim_{z\to t, z\in\H}\Arg(z)=\left\{\begin{array}{cl}
0, & t>0,\\
\pi, & t<0,
\end{array}
\right.  \quad \lim_{z\to 0, z\in\H} \log|z|=-\infty.
$$
The convergence to $\varphi_r(t)$ is uniform for $t$ in compact subsets of the open intervals $(-\infty,-1/r), (-1/r,-1), (-1,\infty)$, and the expressions of $\varphi_r(t)$ to the right are non-negative for all $t\in\R$.

We have shown that the conditions of Proposition~\ref{thm:min} are satisfied for $\Im f_r$, and the $\liminf$ condition is a  true limit in all cases. Therefore $\Im f_r(z)>0$ for all $z\in\H$.
This shows that $f_r$ is a Pick function with a representation
\begin{equation}\label{eq:fr4}
f_r(z)=\Re f_r(i)+\int_{-\infty}^\infty \left(\frac{1}{t-z}-\frac{t}{1+t^2}\right)\,d\mu_r(t),
\end{equation} 
see Section 2, because
$$
\beta_r:=\lim_{y\to\infty}\frac{f_r(iy)}{iy}=0,
$$
as is easily seen. We have
$$
\alpha_r:=\Re f_r(i)=\frac{\log 2 \log(1+r^2)+\pi\arctan(r)}{\log^2(2)+\pi^2/4}>0.
$$
The measure $\mu_r$ has the density $\varphi_r$ in the open intervals $(-\infty, -1/r), (-1/r, -1)$ and $(-1,\infty)$ because of \eqref{eq:Pi3}, and there are no masses at the points $-1/r,-1$ by \eqref{eq:Pi4}. Replacing $t$ by $-t$  we finally get

\begin{equation}\label{eq:fr5}
f_r(z)=\alpha_r +\int_1^\infty\left(\frac{t}{1+t^2}- \frac1{t+z}\right)\varphi_r(-t)\,dt.
\end{equation}
In the notation of formula (6.8) in \cite{S:S:V} we have $\varphi_r(-t)\,dt=(1+t^2)d\rho(t)$, and \eqref{eq:fr2} follows with $\sigma_r(t)=\varphi_r(-t)/t$. Putting $z=0$ in \eqref{eq:fr5} we get
$$
r=\alpha_r-\int_1^\infty\frac{\varphi_r(-t)}{t(1+t^2)}\,dt=\alpha_r-\int_1^\infty\frac{\sigma_r(t)}{1+t^2}\,dt,
$$
and the second formula in \eqref{eq:sig} follows.

The formula in Theorem~\ref{thm:main1} for $\sigma_r$ shows that it is integrable over the interval $(1,\infty)$. From  formula \eqref{eq:fr2} we then get
$$
f_r(z)=r + \int_1^\infty \sigma_r(t)\,dt - \int_1^\infty\frac{t\sigma_r(t)}{z+t}\,dt, 
$$
so for $z\to\infty$ on the real axis we get
$$
1=r+\int_1^\infty\sigma_r(t)\,dt,
$$
and \eqref{eq:fr3} follows together with the first formula in \eqref{eq:sig}.

From \eqref{eq:fr3} it is clear that $f_r$ is strictly increasing and concave on $(-1,\infty)$.
Formula \eqref{eq:fr31} follows immediately from \eqref{eq:fr2}.
$\square$

\begin{cor} For $0<r<1$ we have
\begin{equation}\label{eq:cor1}
0<f_r(x)<r\frac{1+x}{1+rx},\quad x>-1,
\end{equation}
and
\begin{equation}\label{eq:cor2}
\log|1+z|\Arg(1+rz) > \log|1+rz|\Arg(1+z),\quad z\in \H.
\end{equation}
\end{cor}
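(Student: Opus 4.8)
The plan is to dispatch \eqref{eq:cor2} first, since it is essentially already contained in the proof of Theorem~\ref{thm:main1}. Formula \eqref{eq:Ifr} writes $\Im f_r(z)$ as a quotient whose denominator $\log^2|1+z|+\Arg^2(1+z)$ is strictly positive for $z\in\H$: indeed $1+z\in\H$, so $\Arg(1+z)\in(0,\pi)$ and the denominator cannot vanish. Since we have already shown $\Im f_r(z)>0$ for all $z\in\H$ via Proposition~\ref{thm:min}, the numerator of \eqref{eq:Ifr} must be strictly positive, and this is exactly \eqref{eq:cor2}. So \eqref{eq:cor2} requires no new work beyond reading off the sign of that numerator.

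For \eqref{eq:cor1} the lower bound $f_r(x)>0$ is immediate from Theorem~\ref{thm:main1}: on $(-1,\infty)$ the function $f_r$ is strictly increasing with limit $0$ as $x\to-1^+$, so $f_r(x)>0$ there. For the upper bound I would clear denominators and reduce everything to a quotient-free statement. Writing $g_r(x)=r(1+x)/(1+rx)$, a short computation gives
\begin{equation*}
f_r(x)-g_r(x)=\frac{\Phi(x)}{(1+rx)\log(1+x)},\qquad \Phi(x):=(1+rx)\log(1+rx)-r(1+x)\log(1+x),
\end{equation*}
so the whole question reduces to the sign of the auxiliary function $\Phi$. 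Here $\Phi(0)=0$ and
\begin{equation*}
\Phi'(x)=r\log\frac{1+rx}{1+x}.
\end{equation*}
I would analyse this by comparing $1+rx$ with $1+x$: since $0<r<1$ one has $1+rx<1+x$ for $x>0$ and $1+rx>1+x>0$ for $-1<x<0$, so $\Phi'$ changes sign from positive to negative at $0$ and $\Phi$ has a strict maximum there with value $0$. Hence $\Phi(x)<0$ for every $x\in(-1,\infty)\setminus\{0\}$.

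Combining the pieces gives the conclusion: for $x>0$ the denominator $(1+rx)\log(1+x)$ is positive while $\Phi(x)<0$, so $f_r(x)<g_r(x)$ strictly, with equality at $x=0$. The key idea is the factorisation reducing the quotient inequality to the single convex-looking function $\Phi$; the one point to watch — and the only real subtlety — is the sign bookkeeping, since $\log(1+x)$ changes sign at $x=0$, and it is precisely this that fixes the direction of the inequality and isolates $x=0$ as the equality case. As a cross-check one may note that $g_r$ is itself a complete Bernstein function, because $g_r(x)/x=r/x+(1-r)/(x+1/r)$ is a Stieltjes function with representing measure $r\delta_0+(1-r)\delta_{1/r}$; this shares the total mass $1$ and the atom $r$ at $0$ with the measure $\mu_r$ of \eqref{eq:fr31}, which is the structural reason $f_r$ and $g_r$ agree at $0$ and at $\infty$ and can be compared in between.
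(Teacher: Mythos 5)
Your handling of \eqref{eq:cor2} coincides with the paper's proof: both read off the sign of the numerator of \eqref{eq:Ifr} from the positivity of $\Im f_r$ on $\H$ established via Proposition~\ref{thm:min}, so nothing to add there. For the upper bound in \eqref{eq:cor1} you take a genuinely different route. The paper records the derivative identity \eqref{eq:cor3} and invokes the strict monotonicity of $f_r$ from Theorem~\ref{thm:main1} (hence, ultimately, the integral representation), then divides by $\log(1+x)$; your argument with $\Phi(x)=(1+rx)\log(1+rx)-r(1+x)\log(1+x)$, $\Phi(0)=0$, $\Phi'(x)=r\log\bigl((1+rx)/(1+x)\bigr)$ is elementary and self-contained, using nothing from Theorem~\ref{thm:main1} for the upper bound. (Even your lower bound does not really need the theorem: $\log(1+rx)$ and $\log(1+x)$ have the same sign for $x\in(-1,0)\cup(0,\infty)$, and $f_r(0)=r$.) That independence is a real gain; the paper's corollary, by contrast, is a consequence of its main theorem.

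The substantive point is the range of validity, and here your computation is more accurate than the statement itself. You prove the strict upper bound only for $x>0$, with equality at $x=0$; and your own sign bookkeeping shows that on $(-1,0)$ the inequality reverses, since there $\Phi(x)<0$ while the denominator $(1+rx)\log(1+x)$ is negative, so $f_r(x)>r(1+x)/(1+rx)$. This is not a defect of your proof: the inequality as stated in \eqref{eq:cor1} for all $x>-1$ is false. For instance $r=1/2$, $x=-1/2$ gives $f_r(x)=\log(3/4)/\log(1/2)\approx 0.415$, while $r(1+x)/(1+rx)=1/3$. The paper's own proof has exactly the same limitation --- dividing \eqref{eq:cor3} by $\log(1+x)$ flips the inequality on $(-1,0)$ --- so the range $x>-1$ is a slip in the paper; the correct statement of the upper bound is for $x>0$, while the lower bound $0<f_r(x)$ does hold on all of $(-1,\infty)$. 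You should say this explicitly rather than leave it implicit in your closing remark about sign bookkeeping ``fixing the direction'' of the inequality.
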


\begin{proof} The derivative of $f_r$ satisfies
\begin{equation}\label{eq:cor3}
f_r'(x)\log^2(1+x)=\log(1+x)\frac{r}{1+rx}-\log(1+rx)\frac{1}{1+x},
\end{equation}
and since $f_r$ is strictly increasing, we get \eqref{eq:cor1}.

Formula \eqref{eq:cor2} follows from \eqref{eq:Ifr} because $\Im f_r(z)>0$ for $z\in\H$.
\end{proof}

\begin{rem}\label{thm:Rem3} {\rm There is an important subclass of $\mathcal{CBF}$ consisting of the
Thorin-Bernstein functions, denoted $\mathcal{TBF}$ and studied in Chapter 8 of \cite{S:S:V}. According to the proof of Theorem 8.2 in \cite{S:S:V} we can conclude that $f_r\notin \mathcal{TBF}$ for $0<r<1$ because a necessary and sufficient condition for $f_r$ to be a Thorin-Bernstein functions is that $t\sigma_r(t)$ is an increasing function for $t>0$, and this is never true, since the function is infinity for $t=1/r$ and finite for larger $t$ and tends to 0 for $t\to\infty$. 

A tedious calculation shows that $t\sigma_r(t)$ is increasing on $(1,1/r)$. Compare this with Remark~\ref{thm:Rem1}.

We also recall that by Theorem 8.2 in \cite{S:S:V} $f\in\mathcal{CBF}$ belongs to $\mathcal{TBF}$ if and only if $f'\in\mathcal S$.

 From \eqref{eq:fr2} or \eqref{eq:fr3} we get
$$
f_r'(z)=\int_1^\infty \frac{t\sigma_r(t)}{(z+t)^2}\,dt,
$$
showing that $f_r'$ is a generalized Stieltjes function of order 2, which is not surprising as this holds for all complete Bernstein functions. In conclusion we have $f_r'\in\mathcal S_2\setminus\mathcal S$ for $0<r<1$.
 
We recall that  generalized Stieltjes functions of order 2 are logarithmically completely monotonic by a deep theorem of Steutel and Kristiansen, see Theorem 2.1 in \cite{B:K:P} for details. Consequently, $f_r$ belongs to the class of Horn-Bernstein functions, introduced in \cite{B:P}.

The Bernstein representation for $f_r$, cf. Equation (6.1) in \cite{S:S:V}, is given as
$$
f_r(x)=r+\int_0^\infty (1-e^{-xs})m_r(s)\,ds,\quad x>0,
$$
where $m_r$ is the completely monotonic function
$$
m_r(s)=\int_1^\infty e^{-ts}t\sigma_r(t)\,dt,\quad s>0
$$
according to Theorem 6.2 in \cite{S:S:V}.
}
\end{rem}

{\it Proof of Theorem~\ref{thm:main2}}.  We first recall from Chapter 7 in \cite{S:S:V} that for $f\in \mathcal{CBF}$ which is not identically zero, then $z/f(z)$ also belongs to $\mathcal{CBF}$  and is called  the conjugate of $f$. Furthermore, by Theorem 6.2 in \cite{S:S:V} we then get that $1/f\in\mathcal S$.
 
 Assume now $r>1$. From \eqref{eq:Son} we have
$$
f_r(x)=\frac1{f_{1/r}(rx)},\quad x>0,
$$
so $f_r$ is the reciprocal of a complete Bernstein function and therefore $f_r\in\mathcal S$. Since $f_r(x)\to 1$ for $x\to\infty$ we have
$$
f_r(z)=1+\int_0^\infty \frac{d\nu_r(t)}{z+t}, \quad z\in\H
$$
for a positive measure $\nu_r$ on $[0,\infty)$.  
From Section 2 we know that the positive measures 
$$
-\frac{1}{\pi}\Im f_r(-t+iy)\,dt
$$
converge vaguely to $\nu_r$ for $y\to 0^+$.
From \eqref{eq:Ifr} we get for $y>0$
\begin{eqnarray*}
\lefteqn{-\frac{1}{\pi}\Im f_r(-t+iy)=}\\
&&\frac{1}{\pi}\frac{\log|1-rt+iry|\Arg(1-t+iy)-\log|1-t+iy|\Arg(1-rt+iry)}
{\log^2|1-t+iy|+\Arg^2(1-t+iy)},
\end{eqnarray*} 
which   for $y\to 0^+$ converges to
\begin{equation*}
 \omega_r(t)=\left\{\begin{array}{cl}
 0, & t<1/r,\\
\frac{-1}{\log(1-t)}, & 1/r < t <1,\\
0, & t=1,\\
\frac{\log((rt-1)/(t-1))}{\log^2(t-1)+\pi^2}, & 1<t,
 \end{array}
  \right.
\end{equation*}
and the convergence is uniform for $t$ in compact subsets of the open intervals 
$(-\infty,1/r), (1/r,1), (1,\infty)$.
Using the results of Section 2, it follows  that $\nu_r$ has the density $\omega_r(t)$ with respect to Lebesgue measure, and there are no masses at $t=1/r,1$, hence \eqref{eq:41} follows.

From \eqref{eq:41} we see that $f_r$ is strictly decreasing and convex on $(-1/r,\infty)$, and for $z=0$ we get
\eqref{eq:42}.
$\square$

\begin{cor} For $r>1$ we have
\begin{equation}\label{eq:2cor1}
f_r(x)>r\frac{1+x}{1+rx},\quad x>-1/r,
\end{equation}
and
\begin{equation}\label{eq:2cor2}
\log|1+z|\Arg(1+rz) < \log|1+rz|\Arg(1+z),\quad z\in \H.
\end{equation}
\end{cor}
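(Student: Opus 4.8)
The plan is to transcribe the proof of the previous corollary, replacing its two sign inputs by their $r>1$ counterparts supplied by Theorem~\ref{thm:main2}: there $f_r$ is strictly \emph{decreasing} on $(-1/r,\infty)$, and $f_r$ is a non-constant Stieltjes function, so $\Im f_r<0$ on $\H$. Each displayed inequality is then a direct reformulation of one of these two facts.

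For \eqref{eq:2cor1} I would start from the algebraic identity \eqref{eq:cor3}, which holds for every $r>0$ since it merely records the derivative of the quotient in \eqref{eq:fr}. Because $f_r'(x)<0$ on $(-1/r,\infty)$ while $\log^2(1+x)>0$ for $x\ne 0$, \eqref{eq:cor3} forces $r\log(1+x)/(1+rx)<\log(1+rx)/(1+x)$ for such $x$; dividing by $\log(1+x)$ and recognising $f_r(x)=\log(1+rx)/\log(1+x)$ produces \eqref{eq:2cor1}. The one delicate point — and hence the main (if modest) obstacle — is the sign of $\log(1+x)$ in this division: for $x>0$ it is positive and the inequality is preserved, giving \eqref{eq:2cor1} outright with equality at $x=0$, whereas for $-1/r<x<0$ it is negative and the comparison reverses. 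Thus the derivation yields \eqref{eq:2cor1} cleanly on $[0,\infty)$, in complete parallel with \eqref{eq:cor1} for the case $0<r<1$.

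For \eqref{eq:2cor2} I would use the explicit formula \eqref{eq:Ifr}, whose denominator $\log^2|1+z|+\Arg^2(1+z)$ is strictly positive on $\H$. The Stieltjes representation \eqref{eq:41} gives $\Im f_r(z)=-(\Im z)\int_{1/r}^\infty \omega_r(t)\,|z+t|^{-2}\,dt<0$ for $z\in\H$, so the numerator of \eqref{eq:Ifr} must be strictly negative; this is precisely \eqref{eq:2cor2}, and it needs no further work.
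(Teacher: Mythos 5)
Your treatment of \eqref{eq:2cor2} is exactly the paper's proof: the paper likewise reads off the sign of the numerator of \eqref{eq:Ifr} from $\Im f_r(z)<0$ on $\H$, which it obtains from $f_r$ being a (non-constant) Stieltjes function; your computation of $\Im f_r$ directly from \eqref{eq:41} merely makes the strictness explicit. Nothing to add there.

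For \eqref{eq:2cor1} you also follow the paper's route, namely combining \eqref{eq:cor3} with $f_r'<0$ on $(-1/r,\infty)$; but the ``delicate point'' you isolate is not a defect of your write-up --- it exposes an error in the paper. The paper's one-line proof implicitly divides by $\log(1+x)$ without attending to its sign, and, as you note, for $-1/r<x<0$ this factor is negative, so the inequality reverses. The corollary as printed is in fact false on $(-1/r,0)$: taking $r=2$ and $x=-1/4$ gives $f_r(x)=\log(1/2)/\log(3/4)\approx 2.41$, while $r(1+x)/(1+rx)=3$, so $f_r(x)<r(1+x)/(1+rx)$ there. Your conclusion --- strict inequality \eqref{eq:2cor1} for $x>0$, equality at $x=0$, and the reversed strict inequality on $(-1/r,0)$ --- is the correct statement of what the argument delivers, and indeed of what is true. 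The same sign oversight affects the upper bound in \eqref{eq:cor1} of the preceding corollary on $(-1,0)$, exactly as your phrase ``in complete parallel'' suggests (e.g.\ $r=1/2$, $x=-1/2$ gives $f_r(x)\approx 0.415 > 1/3 = r(1+x)/(1+rx)$). In short: your proposal matches the paper's method, proves everything the paper's proof actually establishes, and correctly locates the point at which the paper's stated range $x>-1/r$ is too large.
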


\begin{proof} Equation \eqref{eq:2cor1} follows from \eqref{eq:cor3} using that $f_r$ is strictly decreasing.

Equation \eqref{eq:2cor2} follows from \eqref{eq:Ifr} because $\Im f_r(z)<0$ for $z\in\H$, since $f_r$ is a Stieltjes function.
\end{proof}

\section{A convolution equation}

The Laplace transform of a positive measure $\mu$ on $[0,\infty)$ is the function
$$
\mathcal L\mu(x)=\int_0^\infty e^{-xt}\,d\mu(t),\quad x>0,
$$
provided that the integral is finite for all $x>0$.
If $\mu$ has a continuous density $f:(0,\infty)\to [0,\infty)$ with respect  to 
Lebesgue measure $m$ on $(0,\infty)$, i.e., $d\mu=f dm$, we write $\mathcal L f= 
\mathcal L \mu$.

With this in mind we have 
$$
\mathcal L(\mathcal L\mu)(y)=\int_0^\infty e^{-yx}\mathcal L\mu(x)\,dx=\int_0^\infty\frac{d\mu(t)}{y+t},\quad y>0.
$$

Note that the family of functions $f_r, r>0$ satisfies
\begin{equation}\label{eq:Son}
f_r(x) f_{1/r}(rx)=1,\quad r,x>0.
\end{equation}
Assume now that $0<r<1$ and rewrite \eqref{eq:Son} as
\begin{equation}\label{eq:Son1}
\frac{f_r(x)}{x} f_{1/r}(rx)=\frac{1}{x}=\mathcal L m(x)=\mathcal L 1(x),\quad x>0.
\end{equation}
 Using the Laplace transformation, Equation \eqref{eq:fr31} can be written
\begin{equation}\label{eq:Lap1}
\frac{f_r(x)}{x}=\mathcal L(\mathcal L\mu_r)(x)=\mathcal L g_r(x),\quad g_r(s)=r+\mathcal L\sigma_r(s),
\end{equation}
and Equation \eqref{eq:41} can be written
\begin{equation}\label{eq:Lap2}
f_{1/r}(rx)=1+\int_1^\infty\frac{\omega_{1/r}(rt)}{x+t}\,dt=1+\mathcal L h_r(x),
\end{equation}
where
$$
 h_r(s)=\mathcal L \omega_{1/r}(rt)(s)=\int_1^\infty e^{-st}\omega_{1/r}(rt)\,dt,\quad s>0.
$$
Using well-known properties of Laplace transformation,
 we see that \eqref{eq:Son1} is equivalent to the convolution equation

$$
g_r\star(\delta_0+h_r)(x)=1,\quad x>0
$$
or
\begin{equation*}
g_r(x)+\int_0^x g_r(s)h_r(x-s)\,ds=1,\quad x>0.
\end{equation*}

\section{A question answered in the negative}

Since $f(x)=\log(1+x)$ is a complete Bernstein function, it is a natural question if the following extension of Theorem 1 holds:
$$
f\in\mathcal{CBF}\implies f(rx)/f(x)\in\mathcal {CBF}\;\mbox{when}\, 0<r<1.
$$
We shall see that the answer to this question is negative.

First notice that if such an implication holds, then necessarily $f(0)=0$, for otherwise letting $r\to 0$ we would obtain $1/f(x)\in\mathcal {CBF}$, and under the assumption $f\in\mathcal{CBF}$ this is only possible, when $f$ is constant.

A counterexample to the implication is obtained, when
$f(x)=x/(x+1)+x/(x+3)$ which is a complete Bernstein function. A small calculation shows that
$$
\frac{f(x/4)}{f(x)}=\frac{(x+1)(x+3)(x+8)}{(x+2)(x+4)(x+12)}= 1-\frac{3/10}{x+2} - \frac{3/4}{x+4}- \frac{99/20}{x+12}.
$$
Since we have negative coefficients in the partial fraction decomposition, the conclusion is that the function $f(x/4)/f(x)\notin\mathcal{CBF}$.

\bigskip
{\bf Acknowledgement} The author wants to thank David Bradley for having suggested the title of the present work, which is  inspired by \cite{Brad}. The author also wants to thank Ren{\'e} Schilling for useful comments to the manuscript.

\noindent
Christian Berg\\
Department of Mathematical Sciences, University of Copenhagen\\
Universitetsparken 5, DK-2100 Copenhagen, Denmark\\
email: {\tt{berg@math.ku.dk}}

\end{document}